\newtheorem{proposition}{Proposition}[section]
\newtheorem{lemma}[proposition]{Lemma}
\newtheorem{theorem}[proposition]{Theorem}
\newtheorem{algo}[proposition]{Algorithm}
\newcommand{\R}{\mathbb{R}}
\newcommand{\mH}{\mathcal{H}}
\renewcommand{\P}{{\rm P}}
\newcommand{\E}{{\rm E}}
\newcommand{\FDR}{{\rm FDR}}
\newcommand{\Power}{{\rm Power}}
\newcommand{\BH}{{\rm BH}}
\newcommand{\SNR}{{\rm SNR}}
\makeatletter \@addtoreset{equation}{section} \makeatother
\newcommand {\qed}%
{%
    {}\hfill
    {}\hfill
    {$\square $}%
    \vspace {0.3cm}%
    \pagebreak [2]%
    \par
}%
\newenvironment{proof}{%
    \vspace{0.3cm}%
    \pagebreak [2]%
    \par%
    \noindent {\bf  Proof.}}{\qed}%
\newenvironment{example}{%
    \vspace{0.3cm} \pagebreak [2]%
    \par%
    \refstepcounter{proposition}%
    \noindent%
    {\bf  Example~\theproposition\ }}{}%
\begin{document}

\title{Multiple Testing of Local Extrema for Detection of Change Points}
\author{Dan Cheng\thanks{Research partially
		supported by NSF grant DMS-1902432.}\\ Arizona State University
	\and Zhibing He\footnotemark[1]\\ Arizona State University
	\and Armin Schwartzman\thanks{Research partially
		supported by NSF grant DMS-1811659.} \\ University of California San Diego
	 }

\date{}

\maketitle

\begin{abstract}
A new approach to detect change points based on differential smoothing and multiple testing is presented for long data sequences modeled as piecewise constant functions plus stationary ergodic Gaussian noise. As an application of the STEM algorithm for peak detection developed in \protect\citet{schwartzman2011multiple} and \protect\citet{cheng2017multiple}, the method detects change points as significant local maxima and minima after smoothing and differentiating the observed sequence. The algorithm, combined with the Benjamini-Hochberg procedure for thresholding p-values, provides asymptotic strong control of the False Discovery Rate (FDR) and power consistency, as the length of the sequence and the size of the jumps get large. Simulations show that FDR levels are maintained in non-asymptotic conditions and guide the choice of smoothing bandwidth. The methods are illustrated in magnetometer sensor data and genomic array-CGH data. An R package named ``dSTEM" is available in R cran.
\end{abstract}

\noindent{\small{\bf Key Words}: change points, FDR, power, dSTEM, Gaussian processes, kernel smoothing, differential, multiple testing, local maxima, local minima.}

\section{Introduction}
\label{sec:intro}
Detecting change points in the mean of an observed signal is a common statistical problem with applications in many research areas such as climatology \protect\citep{reeves2007review}, oceanography\protect\citep{killick2010detection}, finance \protect\citep{zeileis2010testing} and medical imaging \protect\citep{nam2012quantifying}. It often appears in the analysis of time series but it has more recently been found in the analysis of genomic sequences, see \protect\citet{erdman2008fast, lai2005comparative, muggeo2010efficient, olshen2004circular, tibshirani2007spatial, wang2005method} and the references therein. Given the large amounts of data present in modern applications, it is of interest to design a change point detection method that can operate over long sequences where the number and location of change points are unknown, and in such a way that the overall detection error rate is controlled.

Many different approaches have been proposed to find and estimate change points, such as kernel-based methods \protect\citep{akritas2003inference}, Bayesian methods \protect\citep{barry1993bayesian, erdman2008fast}, segmentation techniques \protect\citep{olshen2004circular, wang2005method, muggeo2010efficient}, nonparametric tests \protect\citep{lanzante1996resistant} and $L_1$-penalty methods \protect\citep{eilers2004quantile, huang2005detection, tibshirani2007spatial}, including the PELT method \protect\citep{jackson2005algorithm, killick2010detection}. Though there is abundant literature on change points segmentation and detection, only a few papers address the FDR issue which treats the detection of change points as multiple hypothesis testing problems. \protect\citet{tibshirani2007spatial} applied the fused lasso method to the hot-spot detection problem, and provided empirical evidence for the FDR control. \protect\citet{efron2011false} introduced an iterative local FDR based algorithm to explore copy number changes.
Recently, \protect\citet{frick2014multiscale} introduced a simultaneous multiscale change point estimator (SMUCE) for the change point problem in exponential family regression, and proved the control of the probability of overestimating the true number of change points. \protect\citet{li2016fdr} improved the SMUCE method and proposed a multiscale segmentation method FDRSeg, which gives a non-asymptotic upper bound for its FDR in a Gaussian setting and is robust to the choice of parameter $\alpha$.
However, our proposed approach is unique in the following two ways.

First, the noise is assumed to be a stationary Gaussian process, allowing the error terms to be correlated. This is an important departure from the standard assumption of white noise in most of the change-point literature. In fact, applied statisticians desiring to use change-point methods have sometimes abandoned this option in favor of other techniques simply because the white noise assumption does not hold \protect\citep{hung2013hidden}. This paper shows that change-point methods can be devised for correlated noise, expanding the domain of their applicability.

Second, we use the theory of Gaussian processes to compute p-values for all candidate change points, so that significant change points can be selected at a desired significance level. For concreteness, we adopt the Benjamini-Hochberg multiple testing procedure, enabling control of the false discovery rate (FDR) of detected change points when the data sequence is long and the number and location of change points are unknown. To our knowledge, this is the first article proposing a multiple testing method for controlling the FDR of detected change points. Moreover, the asymptotic properties of FDR and power are provided.

In this paper, we consider a signal-plus-noise model where the true signal is a piecewise constant function and the change points are defined as the points of discontinuity. Inspired by the method for detecting peaks in \protect\citet{schwartzman2011multiple} and \protect\citet{cheng2017multiple}, we modify the STEM algorithm therein to detect change points. The central idea is the observation that the true signal has zero derivative everywhere except at the change points, where the derivative is infinite. Thus, in the presence of noise and under temporal or spatial sampling, change points can be seen as positive or negative peaks in the derivative of the smoothed signal. Note that because of the time sampling, derivatives cannot be observed directly and can only be estimated. The focus on the derivative of the smoothed signal effectively transforms the change point detection problem into a peak detection problem. As in the STEM algorithm, the resulting peak detection problem is then solved by identifying local maxima and minima of the derivative as candidate peaks and applying a multiple testing procedure to the list of candidates.

The ``differential Smoothing and TEsting of Maxima/Minima" (dSTEM) algorithm for change point detection consists of the following steps:
\begin{enumerate}
\item {\em Differential kernel smoothing}: to transform change points to local maxima or minima, and to increase the \SNR. This principle of this step is illustrated in Figure \ref{fig:hr}.
\item {\em Candidate peaks}: find local maxima and minima of the differentiated smoothed process.
\item {\em P-values}: computed at each local maximum and minimum under the the null hypothesis of no signal in a local neighborhood.
\item {\em Multiple testing}: apply a multiple testing procedure to the set of local maxima and minima; declare as detected change points those local maxima and minima whose p-values are significant.
\end{enumerate}

The algorithm is illustrated by a toy example in Figure \ref{fig:simul}.

\begin{figure}[t]
\centering
\includegraphics[width=4.8in,height=2.2in]{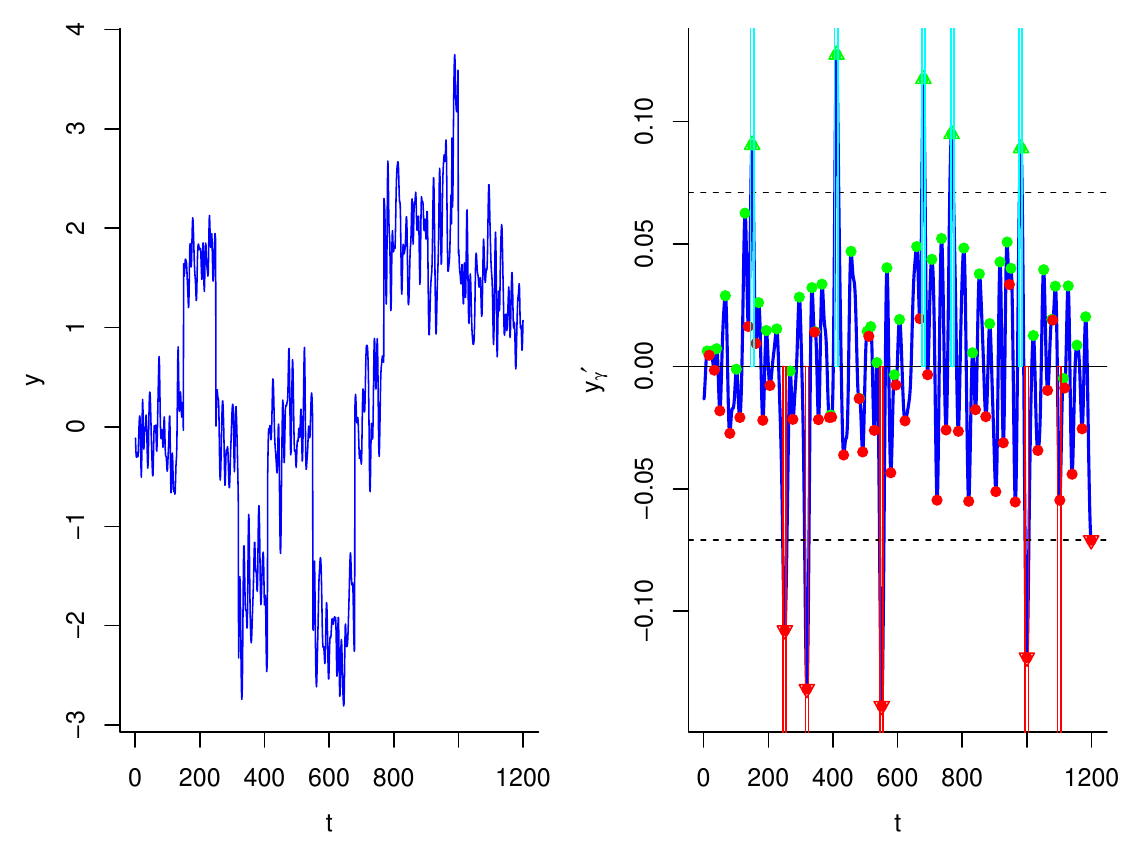} 
\caption{Following the notation in \S \ref{sec:model} and Example \ref{ex:Gaussian}, the left panel is the observed signal-plus-noise model $y(t)$ containing ten true change points with varying $a_i$ and noise $z(t)$ given by \eqref{eq:noiseEx} with $\sigma=1$ and $\nu=2$. The right panel illustrates the dSTEM algorithm. The blue curve is $y_\gamma'(t)$, obtained with a Gaussian smoothing kernel with standard deviation $\gamma=6$. Local maxima (green solid dots) and local minima (red solid dots) are declared as significant (marked with solid triangles) at FDR level $\alpha=0.1$ if their heights are beyond the dotted line thresholds. The cyan and pink bars indicate the location tolerance intervals $(v_i-b, v_i+b)$ with $b=5$ for increasing and decreasing change points respectively. At this tolerance, there are nine true discoveries and one false discovery.}
\label{fig:simul}
\end{figure}

\begin{figure}[!h]
\centering
\includegraphics[width= 4.8in,height=2.2in]{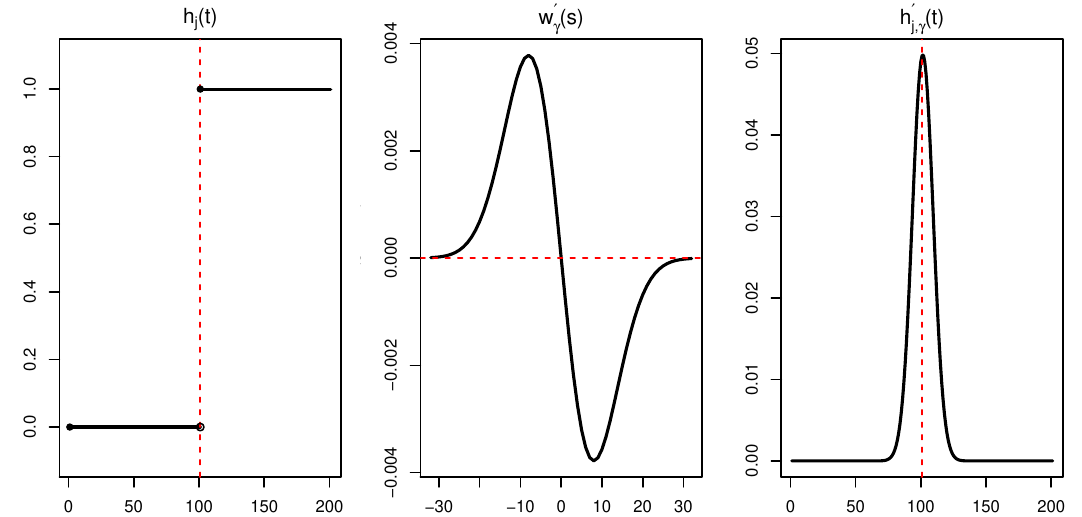}
\caption{Following the notation in \S \ref{sec:model} and \S \ref{sec:model_dev}, the left panel is the change point indicator function $h_j(t)$ with $v_j = 101$, that is, there exists only one change point at the location $t=v_j = 101$. The middle panel is the differential Gaussian kernel $w_{\gamma}^{'}(s)$ on $[-4\gamma,4\gamma]$ with $\gamma=8$. The right panel is $h_{j,\gamma}^{'}(t)$, which is the differential kernel smoothing (under Gaussian kernel) of $h_{j}(t)$ obtained by convolution, as shown in \eqref{eq:h-diff}. By such transformation, the change point becomes a local maxima of a smooth function with compact support.}
\label{fig:hr}
\end{figure}

The dSTEM algorithm above differs from the ones in \protect\citet{schwartzman2011multiple} and \protect\citet{cheng2017multiple} in that peaks are sought in the derivative of the smoothed signal rather than the smoothed signal itself, and that both positive and negative peaks are considered. In addition, an important consideration for the proper definition of error in change point detection is that, as opposed to the peak detection problems considered in \protect\citet{schwartzman2011multiple} and \protect\citet{cheng2017multiple} where signal peaks had compact support, a true single change point over a continuous domain at $t = v$ has Lebesgue measure zero. Thus in the presence of noise, it can hardly be detected exactly at $t = v$. Therefore we introduce a \emph{location tolerance} $b$ that defines the precision within which a change point should be detected. Specifically, given $b$, a detected change point is regarded as a true discovery if it falls in the interval $(v - b, v + b)$. Conversely, if a significant change point is found more than a distance $b$ from any true change point, it is considered a false discovery. The quantity $b$ is not used in the dSTEM algorithm itself but is needed for proper error definition.

Under this convention, it is shown here that the dSTEM algorithm exhibits asymptotic FDR control and power consistency as the length of the sequence and the size of the jumps at the change points increase. These asymptotic conditions are similar to those considered in \protect\citet{schwartzman2011multiple} and \protect\citet{cheng2017multiple} and, in fact, the proofs are extended from those in \protect\citet{cheng2017multiple}.

Simulations for varying levels of smoothing bandwidth $\gamma$, smoothing degree of noise $\nu$ and jump size $a$ are used to study the behavior of the algorithm under non-asymptotic conditions. The simulation results help guide the choice of smoothing bandwidth $\gamma$ with respect to $\nu$ and the desired location tolerance. In general, power increases with bandwidth to a limit dictated by the distance between the change points, so admitting a higher tolerance generally allows a higher bandwidth and higher power.

The methods are illustrated in a genomic sequence of array-CGH data in a breast-cancer tissue sample \protect\citep{loo2004array,hsu2005denoising}. The goal of the analysis is to find genomic segments with copy-number alterations. These are found by detecting change points in the copy number genomic sequence.
Another application is magnetometer sensor readings, aiming at finding the start and end points of hand gesture motion, which is the critical step and foundation of establishing a secret key based on hand gestures.

\section{The multiple testing scheme}
\label{sec:model_algorithm}

\subsection{The model}
\label{sec:model}
We consider a continuous time model, although the algorithm is designed for data discretely sampled in time. Consider the signal-plus-noise model
\begin{equation}
\label{eq:signal+noise}
y(t) = \mu(t) + z(t), \qquad t \in \R,
\end{equation}
where the signal $\mu(t)$ is a piecewise constant function of the form
\begin{equation*}
\mu(t) = \sum_{j=0}^\infty a_j h_j(t), \qquad a_j \in \R\setminus \{0\},
\end{equation*}
with $h_j(t)=\mathbbm{1}(t\geq v_j)$ for $v_j\in \R$. We are interested in finding the \emph{change points} $v_j$. For the asymptotic analysis, we assume
\begin{equation}\label{eq:a-v}
a= \inf_{j}|a_j|>0 \quad {\rm and } \quad d=\inf_{j}|v_j-v_{j-1}|>0,
\end{equation}
so that the change points do not become arbitrarily small in size nor arbitrarily close to each other.

Let $w_\gamma(t)=w(t/\gamma)/\gamma$, where $\gamma>0$ is the bandwidth parameter and $w(t) \ge 0$ is a unimodal symmetric kernel with compact connected support $[-c,c]$ and unit action. Convolving the process \eqref{eq:signal+noise} with the kernel $w_\gamma(t)$ results in the smoothed random process
\begin{equation}
\label{eq:conv}
y_\gamma(t) = w_\gamma(t) * y(t) =
\int_{\R} w_\gamma(t-s) y(s)\,ds = \mu_\gamma(t) + z_\gamma(t),
\end{equation}
where the smoothed signal and smoothed noise are defined respectively as
\begin{equation}
\label{eq:mu-gamma}
\mu_\gamma(t) = w_\gamma(t) * \mu(t) = \sum_{j=0}^\infty a_j h_{j,\gamma}(t) \quad {\rm and} \quad z_\gamma(t) = w_\gamma(t) * z(t),
\end{equation}
and where the smoothed change point takes the form
\begin{equation}
\label{eq:h}
h_{j,\gamma}(t) = w_\gamma(t) * h_j(t).
\end{equation}
The smoothed noise $z_\gamma(t)$ defined by \eqref{eq:mu-gamma} is assumed to be a zero-mean four-times differentiable stationary ergodic Gaussian process.


\subsection{Change point detection as peak detection of the derivative}
\label{sec:model_dev}
Consider now the derivative of the smoothed observed process \eqref{eq:conv}
\begin{equation}
\label{eq:conv-diff}
y'_\gamma(t) = w'_\gamma(t) * y(t) =
\int_{\R^N} w'_\gamma(t-s) y(s)\,ds = \mu'_\gamma(t) + z'_\gamma(t),
\end{equation}
where the derivatives of the smoothed signal and smoothed noise are respectively
\begin{equation*}
\mu'_\gamma(t) = w'_\gamma(t) * \mu(t) = \sum_{j=0}^\infty a_j h'_{j,\gamma}(t) \quad {\rm and} \quad z'_\gamma(t) = w'_\gamma(t) * z(t).
\end{equation*}
A key observation from \eqref{eq:h} is that
\begin{equation}\label{eq:h-diff}
\begin{split}
h'_{j,\gamma}(t) &= \int_{\R} w'_\gamma(t-s) h_j(s)\,ds = \int_{\R} w'_\gamma(s) h_j(t-s)\,ds\\
& = \int_{\R} w'_\gamma(s) \mathbbm{1}(t-s\geq v_j)\,ds = \int_{-\infty}^{t-v_j}w'_\gamma(s)\,ds = w_\gamma(t-v_j),
\end{split}
\end{equation}
as illustrated in Figure \ref{fig:hr}. Thus \eqref{eq:conv-diff} represents a signal-plus-noise model where the smoothed signal
\begin{equation}
\label{eq:mu-gamma-diff-w}
\mu'_\gamma(t) = \sum_{j=0}^\infty a_j h'_{j,\gamma}(t) = \sum_{j=0}^\infty a_j w_\gamma(t-v_j)
\end{equation}
is a sequence of unimodal peaks with the same shape as that of $w_\gamma$ and located at locations $v_j$. The problem of finding change points in $y_\gamma(t)$ is thus reduced to finding (positive or negative) peaks in $y'_\gamma(t)$.

For simplicity, we assume that the compact supports $S_{j,\gamma}$ of the smoothed peak shape $h'_{j,\gamma}(t) = w_\gamma(t-v_j)$ do not overlap, although this is not crucial in practice.


\subsection{The dSTEM algorithm for change point detection}
\label{sec:alg}
Suppose we observe $y(t)$ with $J$ jumps defined by \eqref{eq:signal+noise} in the line of length $L$ centered at the origin, denoted by $U(L)=(-L/2,L/2)$. The following dSTEM (differential Smoothing and TEsting of Maxima/Minima) is a modified version of the STEM algorithm of \protect\citet{schwartzman2011multiple} and \protect\citet{cheng2017multiple} for detecting change points.

\begin{algo}[dSTEM algorithm for change point detection]
\label{alg:STEM}
\hfill\par\noindent
\begin{enumerate}
\item {\em Differential kernel smoothing}:
Obtain the process \eqref{eq:conv-diff} by convolution of $y(t)$ with the kernel derivative $w'_\gamma(t)$.
\item {\em Candidate peaks}:
Find the set of local maxima and minima of $y'_\gamma(t)$ in $U(L)$, denoted by $\tilde{T}_\gamma = \tilde{T}^+_{\gamma} \cup \tilde{T}^-_{\gamma}$, where
\begin{equation*}
\begin{split}
\tilde{T}^+_{\gamma} &= \left\{ t \in U(L): y''_{\gamma}(t) = 0, \  y^{(3)}_{\gamma}(t) < 0\right\},\\
\tilde{T}^-_{\gamma} &= \left\{ t \in U(L): y''_{\gamma}(t) = 0, \  y^{(3)}_{\gamma}(t) > 0\right\}.
\end{split}
\end{equation*}
\item {\em P-values}:
For each $t \in \tilde{T}^+_\gamma$, compute the p-value $p_\gamma(t)$ for testing the (conditional) hypotheses
$$
\begin{aligned}
\mH_{0}(t):& \ \{\mu'(s) = 0 \quad \text{for all} \quad s \in (t-b,t+b) \} \quad {\rm vs.} \\
\mH_{A}(t):& \ \{\mu'(s) > 0 \quad \text{for some} \quad s \in (t-b,t+b) \};
\end{aligned}
$$
and for each $t \in \tilde{T}^-_\gamma$, compute the p-value $p_\gamma(t)$ for testing the (conditional) hypotheses
$$
\begin{aligned}
\mH_{0}(t):& \ \{\mu'(s) = 0 \quad \text{for all} \quad s \in (t-b,t+b) \} \quad {\rm vs.} \\
\mH_{A}(t):& \ \{\mu'(s) < 0 \quad \text{for some} \quad s \in (t-b,t+b) \},
\end{aligned}
$$
where $b>0$ is an appropriate location tolerance.

\item {\em Multiple testing}:
Let $\tilde{m}_\gamma = \# \{t\in \tilde{T}_\gamma\}$ be the number of tested hypotheses. Apply a multiple testing procedure on the set of $\tilde{m}_\gamma$ p-values $\{p_\gamma(t), \, t \in \tilde{T}_\gamma\}$, and declare significant all local extrema whose p-values are smaller than the significance threshold.
\end{enumerate}
\end{algo}

\subsection{P-values}
\label{sec:pvalue}
Given the observed heights $y'_\gamma(t)$ at the local maxima or minima $t\in\tilde{T}_\gamma = \tilde{T}^+_{\gamma} \cup \tilde{T}^-_{\gamma}$, p-values in step (3) of Algorithm \ref{alg:STEM} are computed as
\begin{equation}
\label{eq:pval}
p_\gamma(t) =\begin{cases}
F_\gamma(y'_\gamma(t)), \quad t \in \tilde{T}^+_{\gamma}, \\
F_\gamma(-y'_\gamma(t)), \quad t \in \tilde{T}^-_{\gamma},
\end{cases}
\end{equation}
where $F_\gamma(u)$ denotes the right tail probability of $z'_\gamma(t)$ at the local maximum $t \in \tilde{T}^+_{\gamma}$, evaluated under the null model $\mu'(s) = 0, \forall s\in (t-b, t+b)$, that is,
\begin{equation}
\label{eq:palm}
F_\gamma(u) = \P\left(z'_\gamma(t) > u \big| \text{ $t$ is a local maximum of $z'_\gamma(t)$}\right).
\end{equation}
The second line in \eqref{eq:pval} is obtained by noting that, by \eqref{eq:palm},
\begin{equation*}
\label{eq:palm2}
\begin{split}
\P&\left(z'_\gamma(t) < u \big| \text{ $t$ is a local minimum of $z'_\gamma(t)$}\right)\\
& = \P\left(-z'_\gamma(t) > -u \big| \text{ $t$ is a local maximum of $-z'_\gamma(t)$}\right) = F_\gamma(-u),
\end{split}
\end{equation*}
since $-z'_\gamma(t)$ and $z'_\gamma(t)$ have the same distribution.

The distribution \eqref{eq:palm} has a closed-form expression, which can be obtained as in \protect\citet{schwartzman2011multiple} or \protect\citet{cramer2013stationary}. More specifically, the distribution \eqref{eq:palm} is given by
\begin{equation}
\label{eq:distr}
F_\gamma(u) = 1 - \Phi\left(u \sqrt{\frac{\lambda_{6, \gamma}}{\Delta}}\right) + \sqrt{\frac{2\pi\lambda^2_{4, \gamma}}{\lambda_{6, \gamma}{\sigma'_\gamma}^2}}\phi\left(\frac{u}{\sigma'_\gamma}\right)\Phi\left( u \sqrt{\frac{\lambda^2_{4, \gamma}}{\Delta {\sigma'_\gamma}^2}}\right),
\end{equation}
where ${\sigma'_\gamma}^2={\rm Var}(z_\gamma'(t))$, $\lambda_{4, \gamma}={\rm Var}(z_\gamma^{''}(t))$, $\lambda_{6, \gamma}={\rm Var}(z_\gamma^{(3)}(t))$, $\Delta =  {\sigma'_\gamma}^2\lambda_{6,\gamma} - \lambda_{4,\gamma}^2$, and $\phi(x)$, $\Phi(x)$ are the standard normal density and cumulative distribution function, respectively. The quantities ${\sigma'_\gamma}^2$, $\lambda_{4, \gamma}$ and $\lambda_{6, \gamma}$ depend on the kernel $w_\gamma(t)$ and the autocorrelation function of the original noise process $z(t)$. Explicit expressions may be obtained, for instance, for the Gaussian
autocorrelation model in Example \ref{ex:Gaussian} below, which we use later in the simulations.

\subsection{Error definitions}
\label{sec:errors}
Assuming the model of \S \ref{sec:model}, define the \emph{signal region} $\mathbb{S}_1^b =  \cup_{j=1}^J (v_j-b, v_j+b)$ and \emph{null region} $\mathbb{S}_0^b = U(L) \setminus \mathbb{S}_1^b$. For $u>0$, let $\tilde{T}_\gamma(u) = \tilde{T}^+_{\gamma}(u) \cup \tilde{T}^-_{\gamma}(u)$, where
\begin{equation*}
\begin{split}
\tilde{T}^+_{\gamma}(u) = \left\{ t \in U(L): y_\gamma'(t)>u, \ y''_{\gamma}(t) = 0, \  y^{(3)}_{\gamma}(t) < 0\right\},\\
\tilde{T}^-_{\gamma}(u) = \left\{ t \in U(L): y_\gamma'(t)<-u, \ y''_{\gamma}(t) = 0, \  y^{(3)}_{\gamma}(t) > 0\right\},
\end{split}
\end{equation*}
indicating that $\tilde{T}^+_{\gamma}(u)$ and $\tilde{T}^-_{\gamma}(u)$ are respectively the set of local maxima of $y_\gamma'(t)$ above $u$ and the set of local minima of $y_\gamma'(t)$ below $-u$. The number of totally and falsely detected change points at threshold $u$ are defined respectively as
\begin{equation}
\label{eq:RV}
\begin{split}
R_\gamma(u) &= \#\{t\in \tilde{T}^+_\gamma(u)\} +  \#\{t\in \tilde{T}^-_\gamma(u)\},\\
V_\gamma(u; b) &=\#\{t\in \tilde{T}^+_\gamma(u)\cap \mathbb{S}_0^b\} + \#\{t\in \tilde{T}^-_\gamma(u)\cap \mathbb{S}_0^b\}.
\end{split}
\end{equation}
Both are defined as zero if $\tilde{T}_\gamma(u)$ is empty. The FDR at threshold $u$ is defined as the expected proportion of falsely detected jumps
\begin{equation}
\label{eq:FDR}
\FDR_\gamma(u; b) = \E\left\{ \frac{V_\gamma(u; b)}{R_\gamma(u)\vee1} \right\}.
\end{equation}
Note that when $\gamma$ and $u$ are fixed, $V_\gamma(u; b)$ and hence $\FDR_\gamma(u; b)$ are decreasing in $b$.

Following the notation in \protect\citet{cheng2017multiple}, define the \emph{smoothed signal region} $\mathbb{S}_{1, \gamma}$ to be the support of $\mu'_\gamma(t)$ and \emph{smoothed null region} $\mathbb{S}_{0, \gamma} = U(L) \setminus \mathbb{S}_{1, \gamma}$. We call the difference between the expanded signal support due to smoothing and the true signal support the \emph{transition region} $\mathbb{T}_{\gamma} = \mathbb{S}_{1, \gamma}\setminus \mathbb{S}_1^b =\mathbb{S}_0^b\setminus \mathbb{S}_{0, \gamma}$.


\subsection{Power}
\label{sec:power}
Denote by $I^+$ and $I^-$ the collections of indices $j$ corresponding to increasing and decreasing change points $v_j$, respectively. We define the power of Algorithm \ref{alg:STEM} as the expected fraction of true discovered change points
\begin{equation}
\label{eq:power}
\begin{split}
\Power_\gamma(u; b) = \frac{1}{J} \sum_{j=1}^{J} \Power_{j,\gamma}(u; b) &= \E \Bigg[ \frac{1}{J} \Bigg(\sum_{j\in I^+} \mathbbm{1}\left(
\tilde{T}^+_\gamma(u) \cap (v_j-b, v_j+b) \ne \emptyset \right)\\
&\quad + \sum_{j\in I^-} \mathbbm{1}\left(
\tilde{T}^-_\gamma(u) \cap (v_j-b, v_j+b) \ne \emptyset \right) \Bigg)\Bigg],
\end{split}
\end{equation}
where $\Power_{j,\gamma}(u; b)$ is the probability of detecting jump $v_j$ within a distance $b$,
\begin{equation}\label{eq:power-j}
\begin{split}
\Power_{j,\gamma}(u; b) = \left\{
  \begin{array}{ll}
    \P\left(\tilde{T}^+_\gamma(u)\cap (v_j-b, v_j+b) \ne \emptyset \right), & {\rm if }\ j\in I^+, \\
    \P\left(\tilde{T}^-_\gamma(u)\cap (v_j-b, v_j+b) \ne \emptyset \right), & {\rm if }\ j\in I^-.
  \end{array}
\right.
\end{split}
\end{equation}
The indicator function in \eqref{eq:power} ensures that only one significant local extremum is counted within a distance $b$ of a change point, so power is not inflated. Note that when $\gamma$ and $u$ are fixed, $\Power_\gamma(u; b)$ and $\Power_{j,\gamma}(u; b)$ are increasing in $b$.


\section{Asymptotic FDR control and power consistency}
\label{sec:error-power}
Suppose the BH procedure is applied in step 4 of Algorithm \ref{alg:STEM} as follows. For a fixed $\alpha \in (0,1)$, let $k$ be the largest index for which the $i$th smallest p-value is less than $i\alpha/\tilde{m}_\gamma$. Then the null hypothesis $\mH_0(t)$ at $t \in \tilde{T}_\gamma$ is rejected if
\begin{equation}
\label{eq:thresh-BH-random}
p_\gamma(t) < \frac{k\alpha}{\tilde{m}_\gamma} \iff
\begin{cases}
y'_\gamma(t) > \tilde{u}_{\BH} = F_\gamma^{-1} \left(\frac{k\alpha}{\tilde{m}_\gamma}\right) & \text{ if } t\in \tilde{T}^+_{\gamma}, \\ y'_\gamma(t) < -\tilde{u}_{\BH} = -F_\gamma^{-1} \left(\frac{k\alpha}{\tilde{m}_\gamma}\right) & \text{ if } t\in \tilde{T}^-_{\gamma}, \end{cases}
\end{equation}
where $k\alpha/\tilde{m}_\gamma$ is defined as 1 if $\tilde{m}_\gamma=0$. Since $\tilde{u}_{\BH}$ is random, we define FDR in such BH procedure as
\begin{equation*}
\FDR_{\BH, \gamma}(b) = \E\left\{ \frac{V_\gamma(\tilde{u}_{\BH}; b)}{R_\gamma(\tilde{u}_{\BH})\vee1} \right\},
\end{equation*}
where $R_\gamma(\cdot)$ and $V_\gamma(\cdot; b)$ are defined in \eqref{eq:RV} and the expectation is taken over all possible realizations of the random threshold $\tilde{u}_{\BH}$. We will make use of the following conditions:

\begin{enumerate}
\item[(C1)] The assumptions of \S \ref{sec:model} hold.
\item[(C2)] $L \to \infty$ and $a = \inf_j |a_j| \to\infty$, such that $(\log L)/a^2 \to 0$, $J/L = A_1 + O(a^{-2}+L^{-1/2})$ with $A_1>0$.
\end{enumerate}
In condition (C2), we assume that the length of the search space $L$ increases. In order for the detection procedure to have good power while the error is controlled, the signal strength $a$ should also increase. This assumption is not restrictive since $(\log L)/a^2 \to 0$ implies the search space may grow exponentially faster than the signal strength. These conditions are realistic in applications. In data with repeated observations
and sample size $n$, the signal-to-noise ratio (SNR), equivalent to our signal strength $a$ here, is proportional to $\sqrt{n}$, which is large when the sample size is large in the classical asymptotic sense. Setting $p=L$ as the dimensionality of the problem, the condition $(\log L)/a^2 \to 0$ becomes $(\log p)/n \to 0$, which is similar to the condition required for consistent model selection in high dimensional regression.

Let $\E[\tilde{m}_{0,\gamma}(U(1))]$ and $\E[\tilde{m}_{0,\gamma}(U(1),u)]$ be the expected number of local maxima and local maxima above level $u$ of $z'_\gamma(t)$ on the unit interval $U(1)=(-1/2, 1/2)$, respectively. In particular, we have the following explicit formula \protect\citep{schwartzman2011multiple}
\begin{equation}\label{eq:Expect-LocalMax}
\E[\tilde{m}_{0,\gamma}(U(1))] = \frac{1}{2\pi}\sqrt{\frac{\lambda_{6, \gamma}}{\lambda_{4, \gamma}}}.
\end{equation}
Note that, by symmetry, the expected number of local minima and local minina below level $u$ of $z'_\gamma(t)$ on the unit interval $U(1)$ are given by $\E[\tilde{m}_{0,\gamma}(U(1))]$ and $\E[\tilde{m}_{0,\gamma}(U(1), -u)]$, respectively.

\begin{theorem}
\label{thm:FDR}
Let conditions (C1) and (C2) hold.

(i) Suppose Algorithm \ref{alg:STEM} is applied with a fixed threshold $u>0$. Then
\begin{equation*}
\FDR_\gamma(u; b) \le \frac{2\E[\tilde{m}_{0, \gamma}(U(1),u)](1-2c\gamma A_1)}{2\E[\tilde{m}_{0, \gamma}(U(1),u)](1-2c\gamma A_1) + A_1} + O(a^{-2}+L^{-1/2}).
\end{equation*}

(ii) Suppose Algorithm \ref{alg:STEM} is applied with the random threshold $\tilde{u}_{\BH}$ \eqref{eq:thresh-BH-random}. Then
\begin{equation*}
\FDR_{\BH, \gamma}(b) \le \alpha\frac{2\E[\tilde{m}_{0, \gamma}(U(1))](1-2c\gamma A_1)}{2\E[\tilde{m}_{0, \gamma}(U(1))](1-2c\gamma A_1) + A_1} + O(a^{-1}+L^{-1/4}).
\end{equation*}
\end{theorem}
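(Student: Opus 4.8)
The plan is to reduce both parts to the peak-detection FDR results of \citet{CS:2014b} by exploiting the fact that, after differential smoothing, the signal $\mu'_\gamma(t)=\sum_j a_j w_\gamma(t-v_j)$ is exactly a superposition of (positive and negative) scaled copies of the kernel $w_\gamma$ located at the change points. First I would set up the bookkeeping: split every count into its contribution from $\tilde T^+_\gamma$ and $\tilde T^-_\gamma$, and observe that by the sign symmetry of the Gaussian noise (already used to derive the second line of \eqref{eq:pval}), the negative-peak problem is distributionally identical to the positive-peak problem. So it suffices to control the FDR contribution of local maxima of $y'_\gamma$ landing in the null region $\mathbb S_0^b$, and then add the two symmetric halves — which is where the factor $2$ in front of $\E[\tilde m_{0,\gamma}(U(1),u)]$ comes from.

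Next I would handle the geometry that produces the factor $(1-2c\gamma A_1)$. The smoothed kernel $w_\gamma$ has support of length $2c\gamma$, so the smoothed signal region $\mathbb S_{1,\gamma}$ has total length at most $2c\gamma J$ (using the non-overlap assumption), and hence the null region $\mathbb S_{0,\gamma}=U(L)\setminus\mathbb S_{1,\gamma}$ has length $L(1-2c\gamma J/L)=L(1-2c\gamma A_1)+O(\gamma)$ by (C2). On $\mathbb S_{0,\gamma}$ the process $y'_\gamma$ equals the pure noise $z'_\gamma$, so the expected number of its local maxima above $u$ there is $L(1-2c\gamma A_1)\,\E[\tilde m_{0,\gamma}(U(1),u)]+o(L)$ by stationarity and ergodicity together with the unit-interval formula. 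The transition region $\mathbb T_\gamma=\mathbb S_0^b\setminus\mathbb S_{0,\gamma}$ has length $O(\gamma J)=O(L)$ but its per-change-point contribution to false discoveries is handled exactly as in \citet{CS:2014b}: on $\mathbb T_\gamma$ the signal is nonzero but small relative to the growing jump size, and the condition $(\log L)/a^2\to 0$ forces the height of any local maximum there below the detection scale, so its contribution is absorbed into the $O(a^{-2})$ term. Meanwhile the true-discovery denominator $R_\gamma(u)$ is, with overwhelming probability, at least of order $J=A_1 L+o(L)$, because each jump of size $\ge a\to\infty$ produces a detectable peak near $v_j$; this is the power-consistency input. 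Plugging the numerator bound $2\E[\tilde m_{0,\gamma}(U(1),u)]L(1-2c\gamma A_1)$ and the denominator lower bound into $\E[V/(R\vee1)]$, after a standard concentration argument that replaces the ratio of random variables by the ratio of their expectations up to $O(a^{-2}+L^{-1/2})$, gives part (i).

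For part (ii) I would follow the BH-for-peaks argument of \citet{CS:2014b}: condition on the data, let $k$ be the BH index and $\tilde u_{\BH}=F_\gamma^{-1}(k\alpha/\tilde m_\gamma)$. The Benjamini--Hochberg inequality gives $V_\gamma(\tilde u_{\BH};b)/(R_\gamma(\tilde u_{\BH})\vee1)\le \alpha\,\tilde m_{0,\gamma}/\tilde m_\gamma$ where $\tilde m_{0,\gamma}$ counts tested local extrema in the null region, provided the null p-values are (super-)uniform, which holds because $F_\gamma$ is exactly the null distribution \eqref{eq:palm}. Then I take expectations: $\E[\tilde m_{0,\gamma}]$ is the numerator above with $u\to-\infty$, i.e.\ $2\E[\tilde m_{0,\gamma}(U(1))]L(1-2c\gamma A_1)+o(L)$, and $\E[\tilde m_\gamma]\ge \E[\tilde m_{0,\gamma}]+A_1 L+o(L)$ since each of the $J$ signal peaks contributes at least one candidate local extremum. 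Ratio concentration again converts this into the stated bound, with the weaker error rate $O(a^{-1}+L^{-1/4})$ arising (as in \citet{CS:2014b}) from the square-root loss incurred when controlling the randomness of $\tilde u_{\BH}$ rather than a deterministic threshold.

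I expect the main obstacle to be the transition-region analysis: one must show that local maxima of $y'_\gamma$ falling in $\mathbb T_\gamma$ — where the smoothed signal is a partial, one-sided rise of the kernel bump rather than a clean peak — contribute negligibly to $V_\gamma$, uniformly in the unknown configuration of change points, and this is precisely the place where the joint asymptotics $L\to\infty$, $a\to\infty$ with $(\log L)/a^2\to0$ are needed. Fortunately this is the same estimate carried out in \citet{CS:2014b} for compactly supported peaks, and since here the "peak" is literally $a_j w_\gamma(t-v_j)$ the argument transfers essentially verbatim; I would cite it and verify the one new point, namely that the derivative-smoothing step does not change the relevant moment constants beyond replacing $w$ by $w'$, which is already recorded in the formulas for $\sigma'^2_\gamma,\lambda_{4,\gamma},\lambda_{6,\gamma}$ in \S\ref{sec:pvalue}.
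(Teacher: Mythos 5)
Your overall route --- reduce to Theorem 3 of \citet{CS:2014b}, pick up the factor $2$ by counting both local maxima and local minima of $y'_\gamma$ in the null region, and get the factor $(1-2c\gamma A_1)$ from $|\mathbb{S}_{0,\gamma}|/L$ --- is exactly the paper's proof, which itself consists of these observations plus a citation to the peak-detection result with $N=1$, $A_{2,\gamma}=2c\gamma A_1$, $z_\gamma$ replaced by $z'_\gamma$, and $\E[\tilde m_{0,\gamma}(U(1))]$ replaced by $2\E[\tilde m_{0,\gamma}(U(1))]$.

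However, your treatment of the transition region $\mathbb{T}_\gamma=\mathbb{S}_{1,\gamma}\setminus\mathbb{S}_1^b$ is wrong as stated, and this is the one step you yourself flag as the main obstacle. You argue that local maxima of $y'_\gamma$ falling in $\mathbb{T}_\gamma$ contribute negligibly because the signal there is ``small relative to the growing jump size,'' so their heights stay below the detection scale. It is not small: on $(v_j-c\gamma,v_j-b)\cup(v_j+b,v_j+c\gamma)$ the smoothed signal is $\mu'_\gamma(t)=a_j w_\gamma(t-v_j)$, which away from the edge of the support is of order $|a_j|\to\infty$, so any local maximum occurring there would exceed any fixed $u$ with probability tending to one and would be counted in $V_\gamma(u;b)$. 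The correct argument --- the one the paper uses --- is that such local maxima asymptotically do not occur at all: a local maximum of $y'_\gamma$ requires $y''_\gamma(t)=\mu''_\gamma(t)+z''_\gamma(t)=0$, and on the transition region $\mu''_\gamma(t)=a_j w'_\gamma(t-v_j)$ is nonzero and of order $|a_j|$, hence dominates the stationary noise $z''_\gamma$ and forbids critical points of $y'_\gamma$ there; consequently, as $a\to\infty$, each support segment $[v_j-c\gamma,v_j+c\gamma]$ contributes exactly one local extremum, located inside $(v_j-b,v_j+b)$, and the expected number of extrema on $\mathbb{S}_{1,\gamma}$ is asymptotically $J$. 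With that substitution your argument matches the paper's. Two smaller points: the error term in $|\mathbb{S}_{0,\gamma}|/L$ is $O(a^{-2}+L^{-1/2})$ coming from (C2), not $O(\gamma)$; and in part (i) the denominator lower bound must be $V_\gamma(u;b)$ plus the roughly $A_1L$ true detections (so that the ratio concentrates at the stated fraction), not $A_1L$ alone, otherwise you obtain a weaker bound than the one claimed.
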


\begin{proof}
Since $w_\gamma(t)$ has compact support $[-c\gamma, c\gamma]$, by \eqref{eq:h-diff}, the support $\mathbb{S}_{1,\gamma}$ of $\mu'_\gamma(t)$ in \eqref{eq:mu-gamma-diff-w} is composed of the support segments $[v_j-c\gamma, v_j+c\gamma]$ of $h'_{j,\gamma}(t)$. By condition (C2), $|\mathbb{S}_{1,\gamma}|/L = 2c\gamma A_1 + O(a^{-2}+L^{-1/2})$, which implies $|\mathbb{S}_{0,\gamma}|/L = 1-2c\gamma A_1 + O(a^{-2}+L^{-1/2})$.

Notice that, on the null region $\mathbb{S}_{0,\gamma}$, the expected number of local extrema, including both local maxima and minima, equals $2|\mathbb{S}_{0,\gamma}|\E[\tilde{m}_{0, \gamma}(U(1))]$. On the other hand, following the proof of Theorem 3 in \protect\citet{cheng2017multiple}, the expected number of local extrema on the signal region $\mathbb{S}_{1,\gamma}$ is asymptotically equivalent to $J$. This is because, for each $j\in I^+$ and $b>0$, as $a\to \infty$, asymptotically, there is no local maximum of $y'_\gamma(t)$ in $(v_j-c\gamma, v_j-b)\cup(v_j+b, v_j+c\gamma)$, and there is only one local maximum of $y'_\gamma(t)$ in $(v_j-b, v_j+b)$. The reasoning for the case of minima is similar.

The result then follows from the same arguments for proving Theorem 3 in \protect\citet{cheng2017multiple} with $N=1$, $A_{2,\gamma} = 2c\gamma A_1$, $z_\gamma(t)$ replaced by $z_\gamma'(t)$ and $\E[\tilde{m}_{0, \gamma}(U(1))]$ replaced by $2\E[\tilde{m}_{0, \gamma}(U(1))]$.
\end{proof}

\begin{lemma} \label{lemma:power-approx}
Let conditions (C1) and (C2) hold. As $|a_j| \to \infty$, the power for peak $j$ and fixed $u$ \eqref{eq:power-j} can be approximated by
\begin{equation}
\label{eq:approx-power}
\Power_{j,\gamma}(u; b) = \Phi\left(\frac{|a_j| w_{\gamma}(0) - u}{\sigma'_\gamma}\right)(1 + O(|a_j|^{-2})).
\end{equation}
\end{lemma}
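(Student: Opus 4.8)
The plan is to reduce the probability that a local maximum of $y'_\gamma(t)$ above $u$ falls in $(v_j-b,v_j+b)$ to a statement about the Gaussian noise $z'_\gamma(t)$ evaluated at the peak location of the signal $\mu'_\gamma(t)$. Recall from \eqref{eq:mu-gamma-diff-w} that on the support $S_{j,\gamma}=[v_j-c\gamma,v_j+c\gamma]$ (which by assumption does not overlap the other supports), $\mu'_\gamma(t)=a_j w_\gamma(t-v_j)$, a scaled copy of the unimodal kernel $w_\gamma$ centered at $v_j$ with peak height $|a_j|w_\gamma(0)$. For concreteness take $j\in I^+$, so $a_j>0$ and we are looking at local maxima. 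The first step is to observe that $y'_\gamma(t)=a_jw_\gamma(t-v_j)+z'_\gamma(t)$ on $S_{j,\gamma}$, and that as $|a_j|\to\infty$ the deterministic part dominates, so with probability tending to one there is a unique local maximum of $y'_\gamma(t)$ inside $(v_j-b,v_j+b)$ and its height is close to $a_jw_\gamma(0)+z'_\gamma(\hat t)$ for $\hat t$ near $v_j$; this is exactly the ``only one local maximum'' fact already invoked in the proof of Theorem~\ref{thm:FDR} and established as Theorem 3 in \citet{CS:2014b}.

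The second step is to make that heuristic quantitative. I would write the event $\{\tilde T^+_\gamma(u)\cap(v_j-b,v_j+b)\neq\emptyset\}$ and show it is, up to an error of probability $O(|a_j|^{-2})$, the same as the event $\{a_jw_\gamma(0)+z'_\gamma(v_j)>u\}$. The key analytic input is a Taylor expansion of $w_\gamma$ around its mode: since $w$ is symmetric and unimodal with a smooth maximum at $0$, $w_\gamma(t-v_j)=w_\gamma(0)+\tfrac12 w_\gamma''(0)(t-v_j)^2+O((t-v_j)^3)$, and $w_\gamma''(0)<0$. The location $\hat t$ of the argmax of $a_jw_\gamma(t-v_j)+z'_\gamma(t)$ satisfies $a_jw_\gamma'(\hat t-v_j)+z''_\gamma(\hat t)=0$; since $w_\gamma'(0)=0$ and $w_\gamma''(0)\neq0$, the implicit function theorem gives $\hat t-v_j=-z''_\gamma(v_j)/(a_jw_\gamma''(0))+O(a_j^{-2})=O_p(a_j^{-1})$. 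Substituting back, the peak height is $a_jw_\gamma(0)+z'_\gamma(v_j)+O_p(a_j^{-1})$, and more precisely the $(t-v_j)^2$ term contributes $a_j w_\gamma''(0)\cdot O_p(a_j^{-2})=O_p(a_j^{-1})$ while the correction to $z'_\gamma$ from moving off $v_j$ is also $O_p(a_j^{-1})$. One then checks the height exceeds $u$ iff $z'_\gamma(v_j)>u-a_jw_\gamma(0)+O_p(a_j^{-1})$, and since $z'_\gamma(v_j)\sim N(0,{\sigma'_\gamma}^2)$ unconditionally (no Palm conditioning is needed here because we are evaluating at the fixed deterministic point $v_j$, not at a random local maximum of the noise), the probability is $\Phi\big((a_jw_\gamma(0)-u)/\sigma'_\gamma\big)$ up to the stated relative error. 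The $O(|a_j|^{-2})$ rather than $O(|a_j|^{-1})$ in the final bound comes from absorbing the $O_p(a_j^{-1})$ shift into the Gaussian tail: $\Phi(x+O(a_j^{-1}))=\Phi(x)(1+O(a_j^{-2}))$ holds when $x\asymp a_j$, because $\phi(x)/\Phi(x)\asymp x$ in that regime so the multiplicative error is $x\cdot O(a_j^{-1})\cdot$(something) — actually one must be a little careful and show the error is $O(a_j^{-1})\cdot\phi(x)$ absolute, which against $\Phi(x)$ of the same exponential order gives relative error $O(a_j^{-1}\cdot x)=O(1)$; the genuine $O(a_j^{-2})$ must instead come from a second-order expansion of $\hat t-v_j$ and the symmetry $w_\gamma'(0)=0$ killing the first-order location error's effect on the height. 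The $j\in I^-$ case follows verbatim by replacing $z'_\gamma$ with $-z'_\gamma$ and $a_j$ with $|a_j|$, using that $z'_\gamma$ is symmetric.

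The main obstacle I expect is controlling the height perturbation sharply enough to get the relative error $O(|a_j|^{-2})$ rather than a cruder $O(|a_j|^{-1})$. This requires (a) showing the argmax location error is itself $O_p(|a_j|^{-1})$ with a bound uniform enough to plug into the quadratic term, which rests on $w_\gamma''(0)\neq0$ and on Gaussian concentration of $z''_\gamma$ and $z^{(3)}_\gamma$ on the compact set $S_{j,\gamma}$; and (b) using the oddness of $w_\gamma'$ at $0$ so that the leading location error, being driven by the even quantity entering quadratically, contributes only at order $a_j^{-2}$ to the height. A secondary technical point is justifying that the rare event where $\mu'_\gamma+z'_\gamma$ has no local maximum in $(v_j-b,v_j+b)$, or has its relevant maximum outside, has probability $O(|a_j|^{-2})$; this follows from bounding $\sup_{S_{j,\gamma}}|z'_\gamma|$, $\sup|z''_\gamma|$ and $\sup|z^{(3)}_\gamma|$ by, say, $\sqrt{\log|a_j|}$ via the Borell--TIS inequality and Gaussian process maximal bounds, which is $o(|a_j|)$ under (C2). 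All of this parallels the corresponding estimates in \citet{CS:2014b}, so the proof is primarily a matter of transcribing those arguments with $z_\gamma$ replaced by $z'_\gamma$ and the peak shape $w_\gamma$ in place of the smoothed signal peak there.
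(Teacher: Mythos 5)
Your overall architecture coincides with the paper's: the paper's proof of Lemma \ref{lemma:power-approx} consists precisely of noting that $h'_{j,\gamma}$ attains its maximum $w_\gamma(0)$ at $v_j$ and then invoking the argument of Lemma 4 of \citet{CS:2014b} with $z_\gamma$ replaced by $z'_\gamma$ --- that is, the main term is the unconditional Gaussian marginal $\P\left(z'_\gamma(v_j) > u - |a_j| w_\gamma(0)\right) = \Phi\left((|a_j|w_\gamma(0)-u)/\sigma'_\gamma\right)$, and the $O(|a_j|^{-2})$ accounts for the exceptional event on which the local extremum of $y'_\gamma$ fails to exist in, or escapes from, $(v_j-b,v_j+b)$. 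You identify both ingredients, including the correct observation that no Palm conditioning is needed when evaluating at the deterministic point $v_j$.

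However, your bookkeeping for the $O(|a_j|^{-2})$ factor is confused, and the repair you propose goes in the wrong direction. The argument of $\Phi$ is $x=(|a_j|w_\gamma(0)-u)/\sigma'_\gamma\to+\infty$, so $\Phi(x)\to 1$ and $\phi(x)/\Phi(x)$ is exponentially small; it is \emph{not} $\asymp x$, which is the behaviour as $x\to-\infty$. Hence an $O_p(|a_j|^{-1})$ perturbation of the peak height shifts $\Phi(x)$ by only $O(|a_j|^{-1}\phi(x))$, which is exponentially negligible, and there is no need for a second-order expansion of $\hat t-v_j$ or for the oddness of $w'_\gamma$ to ``kill'' the first-order location error. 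The genuine source of the $O(|a_j|^{-2})$ is exactly what you relegate to a ``secondary technical point'': the probability of the bad event (no interior local maximum above $u$, or a spurious extremum), controlled by Chebyshev/Kac--Rice bounds of the form $\P(|z''_\gamma(t)|>c|a_j|)=O(|a_j|^{-2})$, or by the Borell--TIS route you mention. Since $\Phi(x)$ is bounded away from $0$, an additive $O(|a_j|^{-2})$ is equivalent to the multiplicative $(1+O(|a_j|^{-2}))$ in the statement, and the lemma follows. A minor further caution: the implicit-function-theorem step requires $w\in C^2$ with $w''(0)<0$, which is not explicitly among the paper's assumptions on the kernel, and is avoidable by arguing, as in \citet{CS:2014b}, directly from the event $\{y'_\gamma(v_j)>u\}$ rather than from the location of the argmax.
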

\begin{proof}
By \eqref{eq:h-diff}, $h_{j,\gamma}'(v_j)=w_{\gamma}(0)$ is the maximum of $h_{j,\gamma}'(t)$ over $t\in \R$. The result then follows from Lemma 4 in \protect\citet{cheng2017multiple} with $z_\gamma(t)$ replaced by $z_\gamma'(t)$.
\end{proof}
By similar arguments in \protect\citet{cheng2017multiple} (see equation (20) therein), one can show that the random threshold $\tilde{u}_{\BH}$ converges asymptotically to the deterministic threshold
\begin{equation}
\label{eq:thresh-BH-fixed}
u^*_{\BH} = F_\gamma^{-1}\left(\frac{\alpha A_1}{A_1 + 2\E[\tilde{m}_{0,\gamma}(U(1))](1-2c\gamma A_1)(1-\alpha)}\right),
\end{equation}
where $\E[\tilde{m}_{0,\gamma}(U(1))]$ is given by \eqref{eq:Expect-LocalMax}. Since $\tilde{u}_{\BH}$ is random, similarly to the definition of $\FDR_{\BH, \gamma}(b)$, we define power in the BH procedure as
\begin{equation*}
\begin{split}
\Power_{\BH,\gamma}(b) &= \E \Bigg[ \frac{1}{J} \Bigg(\sum_{j\in I^+} \mathbbm{1}\left(
\tilde{T}^+_\gamma(\tilde{u}_{\BH}) \cap (v_j-b, v_j+b) \ne \emptyset \right) \\
&\qquad \qquad + \sum_{j\in I^-} \mathbbm{1}\left(
\tilde{T}^-_\gamma(\tilde{u}_{\BH}) \cap (v_j-b, v_j+b) \ne \emptyset \right) \Bigg)\Bigg].
\end{split}
\end{equation*}

\begin{theorem}
\label{thm:power}
Let conditions (C1) and (C2) hold.

(i) Suppose Algorithm \ref{alg:STEM} is applied with a fixed threshold $u>0$. Then
\[
\Power_\gamma(u; b) = 1 - O(a^{-2}).
\]

(ii) Suppose Algorithm \ref{alg:STEM} is applied with the random threshold $\tilde{u}_{\BH}$ \eqref{eq:thresh-BH-random}. Then
\[
\Power_{\BH,\gamma}(b) = 1 - O(a^{-2}+L^{-1/2}).
\]
\end{theorem}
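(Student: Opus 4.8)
The plan is to reduce part~(i) to Lemma~\ref{lemma:power-approx} plus a Gaussian tail estimate, and part~(ii) to part~(i) together with the convergence of the random threshold $\tilde u_{\BH}$ to the deterministic value $u^*_{\BH}$ in \eqref{eq:thresh-BH-fixed}. For part~(i): by Lemma~\ref{lemma:power-approx}, the bound $|a_j|\ge a$ for all $j$, and monotonicity of $\Phi$,
$$
\Power_{j,\gamma}(u;b)\ \ge\ \Phi\!\left(\frac{a\,w_\gamma(0)-u}{\sigma'_\gamma}\right)\bigl(1+O(a^{-2})\bigr)
$$
uniformly in $j$. Under (C2) the argument of $\Phi$ tends to $+\infty$ while $u,w_\gamma(0),\sigma'_\gamma$ stay fixed, so by the elementary bound $1-\Phi(x)\le\phi(x)/x$ the quantity $1-\Phi\bigl((a\,w_\gamma(0)-u)/\sigma'_\gamma\bigr)$ decays faster than any power of $a$ and is in particular $O(a^{-2})$; hence $\Power_{j,\gamma}(u;b)=1-O(a^{-2})$ uniformly in $j$. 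Averaging over $j=1,\dots,J$ in \eqref{eq:power} and using $\Power_\gamma(u;b)\le1$ gives $\Power_\gamma(u;b)=1-O(a^{-2})$.

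For part~(ii): fix a small $\eps>0$ and set $G=\{\tilde u_{\BH}\le u^*_{\BH}+\eps\}$. Because the sets $\tilde T^{\pm}_\gamma(u)$ shrink as $u$ increases, on $G$ every indicator appearing in the definition of $\Power_{\BH,\gamma}(b)$ dominates the corresponding indicator evaluated at the \emph{fixed} threshold $u^*_{\BH}+\eps$. Writing $X\in[0,1]$ for the average of those fixed-threshold indicators, so that $\E X=\Power_\gamma(u^*_{\BH}+\eps;b)$, we obtain
$$
\Power_{\BH,\gamma}(b)\ \ge\ \E[\mathbbm{1}_G\,X]\ \ge\ \E X-\P(G^c)\ =\ \Power_\gamma(u^*_{\BH}+\eps;b)-\P\bigl(\tilde u_{\BH}>u^*_{\BH}+\eps\bigr).
$$
By part~(i) with fixed threshold $u^*_{\BH}+\eps$, the first term is $1-O(a^{-2})$, so it remains to show $\P(\tilde u_{\BH}>u^*_{\BH}+\eps)=O(a^{-2}+L^{-1/2})$. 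This follows from the same concentration arguments that yield $\tilde u_{\BH}\to u^*_{\BH}$ (equation~(20) of \citet{CS:2014b} and the reasoning in the proof of Theorem~\ref{thm:FDR}): by ergodicity of $z'_\gamma$ the total count $\tilde m_\gamma$ of candidate extrema and the count $k=R_\gamma(\tilde u_{\BH})$ of rejections concentrate, with fluctuations of order $L^{-1/2}$, around $J+2|\mathbb{S}_{0,\gamma}|\,\E[\tilde m_{0,\gamma}(U(1))]$ and its BH-reduced value; (C2) supplies the $J/L=A_1+O(a^{-2}+L^{-1/2})$ error, and part~(i) bounds the shortfall in the number of rejected true change points by $O(a^{-2})$. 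Feeding these into $\tilde u_{\BH}=F_\gamma^{-1}(k\alpha/\tilde m_\gamma)$ and using continuity and strict monotonicity of $F_\gamma$ at $u^*_{\BH}$ produces the claimed probability bound. Combined with $\Power_{\BH,\gamma}(b)\le1$, this gives $\Power_{\BH,\gamma}(b)=1-O(a^{-2}+L^{-1/2})$.

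The main obstacle is this last step, the control of the random BH threshold. One must establish simultaneously a uniform-in-$j$ strengthening of part~(i) --- so that with high probability essentially all $J\asymp L$ true change points are rejected, which is precisely where the hypothesis $(\log L)/a^2\to0$ in (C2) is used --- and a law-of-large-numbers statement for the empirical count of significant null extrema on $\mathbb{S}_{0,\gamma}$; the fixed-point identity underlying \eqref{eq:thresh-BH-fixed} then pins $\tilde u_{\BH}$ near $u^*_{\BH}$. Everything else amounts to bookkeeping of the $O(a^{-2})$ and $O(L^{-1/2})$ error terms, carried over from \citet{CS:2014b} with $z_\gamma$ replaced by $z'_\gamma$.
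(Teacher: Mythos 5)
Your proposal is correct and follows essentially the same route as the paper, whose proof is a one-line deferral to Theorem 5 of \citet{CS:2014b}: part (i) via Lemma \ref{lemma:power-approx} and the super-polynomial Gaussian tail decay forced by $a\to\infty$, and part (ii) by comparing the random threshold $\tilde u_{\BH}$ to the deterministic $u^*_{\BH}$ of \eqref{eq:thresh-BH-fixed} and bounding the probability of the bad event. Your reconstruction supplies more detail than the paper itself does, and the one genuinely delicate step you correctly identify (concentration of $\tilde u_{\BH}$) is exactly the content carried over from \citet{CS:2014b} with $z_\gamma$ replaced by $z'_\gamma$.
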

\begin{proof}
The desired results follow from similar arguments for showing Theorem 5 in \protect\citet{cheng2017multiple}.
\end{proof}

\begin{example}[{\bf Gaussian autocorrelation model}]
\label{ex:Gaussian}
Let the noise $z(t)$ in \eqref{eq:signal+noise} be constructed as
\begin{equation}
\label{eq:noiseEx}
z(t) = \sigma \int_{\R} \frac{1}{\nu} \phi\left(\frac{t-s}{\nu}\right)\,dB(s), \qquad \sigma, \nu > 0,
\end{equation}
where $\phi$ is the standard Gaussian density, $dB(s)$ is Gaussian white noise and $\nu > 0$ ($z(t)$ is regarded by convention as Gaussian white noise when $\nu=0$). Convolving with a Gaussian kernel $w_\gamma(t) = (1/\gamma)\phi(t/\gamma)$ with $\gamma > 0$ as in \eqref{eq:mu-gamma} produces a zero-mean infinitely differentiable stationary ergodic Gaussian field $z_\gamma(t)$ such that
$$
z'_\gamma(t) = w'_\gamma(t) * z(t) = \sigma \int_{\R^N} \frac{-(t-s)}{\xi^2} \phi\left(\frac{t-s}{\xi}\right)\,dB(s), \quad \xi = \sqrt{\gamma^2 + \nu^2},
$$
with $\sigma'^2_\gamma = \sigma^2/(4\sqrt{\pi} \xi^3)$, $\lambda_{4,\gamma} = 3\sigma^2/(8\sqrt{\pi} \xi^5)$ and $\lambda_{6,\gamma} = 33\sigma^2/(16\sqrt{\pi} \xi^7)$. We have
\begin{equation}\label{eq:SNR}
\SNR_{j,\gamma} = \frac{a_j w_\gamma(0)}{\sigma'_\gamma} = \frac{\sqrt{2}|a_j|}{\sigma\pi^{1/4}}\left[\frac{(\gamma^2 + \nu^2)^{3/4}}{\gamma}\right].
\end{equation}
As a function of $\gamma$, the SNR has a local minimum at $\gamma^* = \sqrt{2}\nu$ and is strictly increasing for large $\gamma$. In particular, when $\nu=0$, it is strictly increasing in $\gamma$. Thus we generally expect the detection power to increase with $\gamma$ for $\gamma > \sqrt{2} \nu$. This will be confirmed in the simulations below. Note that for $\nu > 0$, the SNR is unbounded as $\gamma \to 0$, however in practice $\gamma$ cannot be too small: if the support of $w_\gamma$ becomes smaller than the sampling interval, then the derivative $\mu'_\gamma$ cannot be estimated.
\end{example}

\section{Simulation studies}
\label{sec:simulations}

\subsection{Performance of the dSTEM algorithm}
\label{simu:dSTEM}

Simulations were used to evaluate the performance and limitations of the dSTEM algorithm for signals $\mu(t)=a\lfloor t/d \rfloor$, where $t = 1, \ldots, L$, $L = 12000$, and signal strength $a\in \{1, 1.5 ,2\}$. Under this setting, the true change points are $v_j = jd$ for $j = 1, \ldots, L/d-1$, and the distance between neighboring change points is $d=100$. The noise is generated as the Gaussian process constructed in \eqref{eq:noiseEx} with $\sigma=1$ and varying $\nu$. Notice that the random error is white noise when $\nu=0$, and is autocorelated when $\nu > 0$. The smoothing kernels are $w_\gamma(t) = (1/\gamma)\phi(t/\gamma)\mathbbm{1}(t\in[-4\gamma, 4\gamma])$ for varying $\gamma$. The BH procedure was applied at FDR level $\alpha=0.1$ and the tolerance $b=5$. Results were averaged over 1,000 replications to simulate the expectations.

\begin{figure}[!h]
\centering
\includegraphics[width=\textwidth]{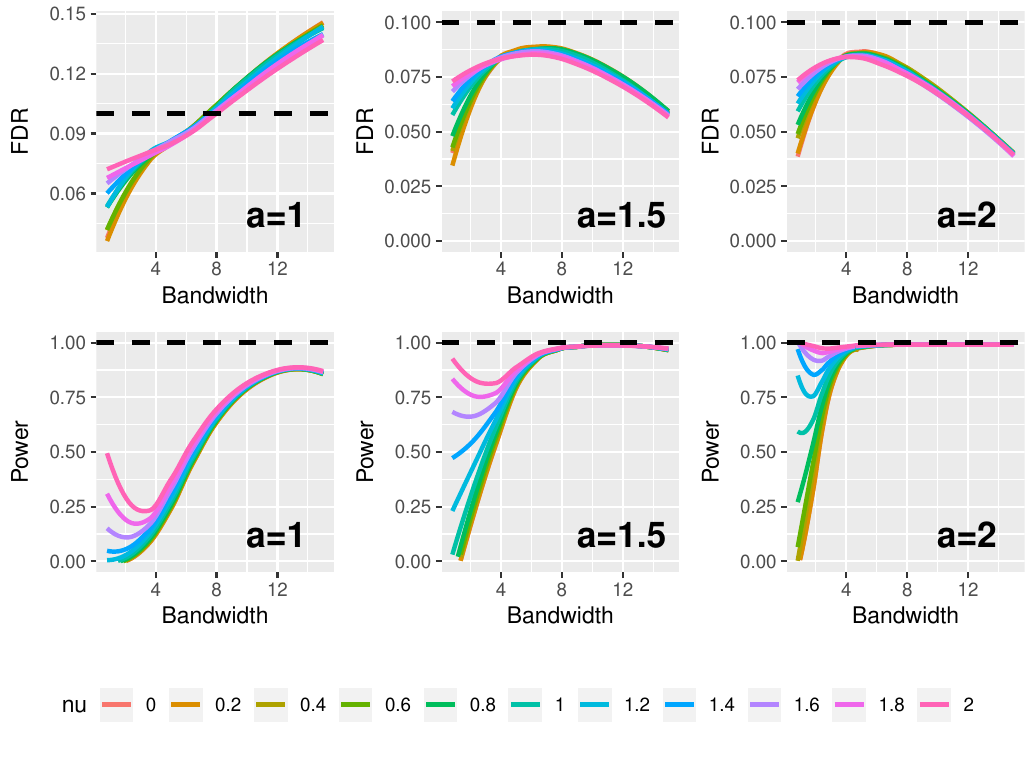}
\caption{The FDR (top) and power (bottom) vs. different combinations of the smoothness parameter $\nu$ (ranging from 0 to 2), the signal strength $a$ (taking values 1, 1.5 and 2) and the bandwidth $\gamma$ (ranging from 0.2 to 15). Here, the significance level $\alpha=0.1$, tolerance $b=5$ and $d=100$.}
\label{fig:fdrpow}
\end{figure}

The results of FDR and power are shown in Figure \ref{fig:fdrpow}. We see that for fixed $\gamma$ and $\nu$, as the strength of the signal $a$ increases, FDR will decrease while the power will increase; moreover, FDR is eventually controlled below the nominal level and the power tends to 1, which is consistent with Theorems \ref{thm:FDR} and \ref{thm:power}. For each fixed $a$, the power is seen to first decrease quickly and then increase again as $\gamma$ increases. This phenomenon coincides with the behavior of the SNR \eqref{eq:SNR} derived in Example \ref{ex:Gaussian}, predicting the power to decrease for $\gamma\le \sqrt{2}\nu$ and increase for $\gamma>\sqrt{2}\nu$. Meanwhile, if $a$ is moderate or large, the FDR is seen to first increase and then decrease as $\gamma$ increases, with the maximum of FDR still controlled below the nominal level. 

\begin{figure}[!h]
	\centering
	\includegraphics[width=\textwidth]{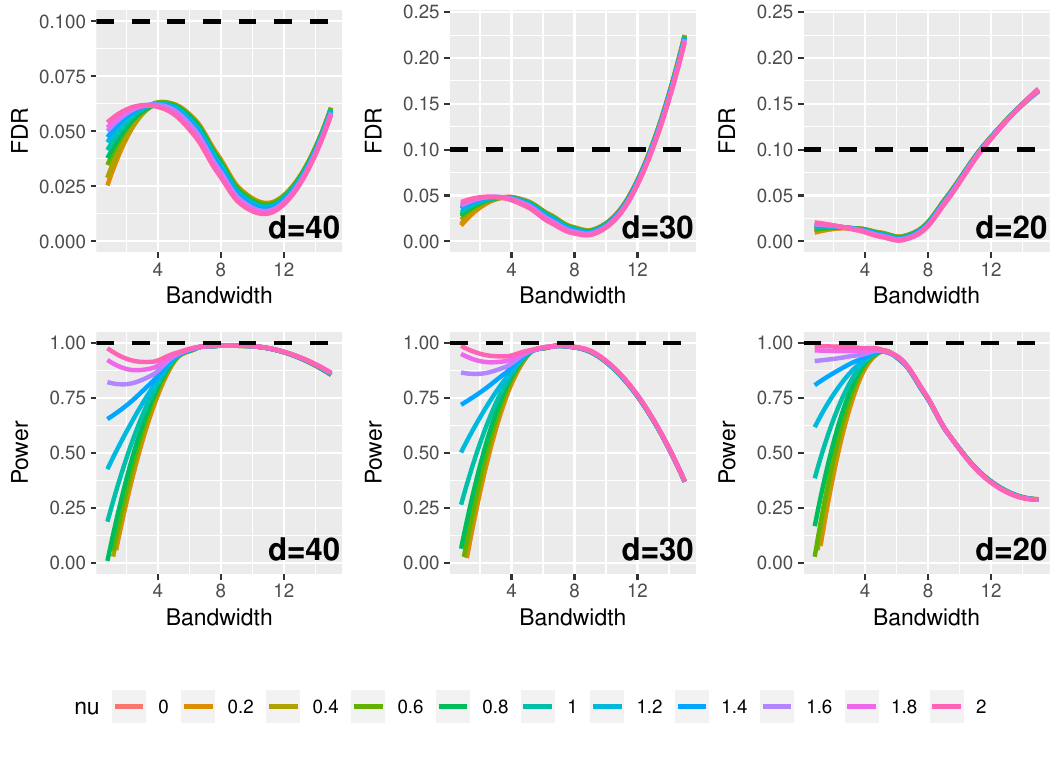}
	\caption{The FDR (top) and power (bottom) vs. different combinations of the smoothness parameter $\nu$ (ranging from 0 to 2), the width between neighboring change points $d$ (taking values 40, 30 and 20) and the bandwidth $\gamma$ (ranging from 0.2 to 15). Here, the significance level $\alpha=0.1$, tolerance $b=5$ and signal strength $a=1.5$.}
	\label{fig:fdrpowd}
\end{figure}

In the simulations in Figure \ref{fig:fdrpow} above, the distance of neighboring change points $d=100$ is large enough so that the kernel smoothing effectively affects only one change point at a time. However, if $d$ is small, then the kernel smoothing with large $\gamma$ may produce interference between neighboring change points, causing the power decrease. To illustrate this, we take the case where the signal strength is $a=1.5$ and perform simulations for FDR and power with $d=40$, $30$ and $20$. As shown in Figure \ref{fig:fdrpowd}, too large a $\gamma$ makes FDR increase and power decrease, due to the overlap between the kernel smoothing at neighboring change points. This phenomenon becomes more evident when $d$ is small ($d=20$). Theoretically, the neighboring interference would happen when $d$ is less than the support of the smoothing kernel, which is about $8\gamma$ in our Gaussian case here. In particular, we see that the turning point of the power, attaining almost its maximum, appears at around $\gamma=8$ for cases $d=40$ and $d=30$, while at around $\gamma=5$ for the case $d=20$. This suggests that $\gamma=d/4$ is a good choice of bandwidth when $d$ is not large.

\subsection{Choice of the bandwidth $\gamma$}

Figure \ref{fig:fdrpow} shows that the bandwidth $\gamma$ will greatly affect the performance of dSTEM.
We see that larger $\gamma$ tends to attain a smaller FDR and larger power. 
However, as shown in Figure \ref{fig:fdrpowd}, if $\gamma$ is too large, it will produce interference between neighboring change points and contamination error, thereby decreasing the power. Thus, the choice of $\gamma$ is critical for the performance of dSTEM. 

The optimal bandwidth $\gamma$ is the value that maximizes the power while controlling the FDR under the significance level. It is difficult to obtain the optimal $\gamma$ theoretically. However, in our model, the signal is a piecewise constant function, and it is possible to obtain the optimal $\gamma$ in practice by making more assumptions on the noise, such as in Example \ref{ex:Gaussian} using the Gaussian autocorrelation model. Figure \ref{fig:fdrpow} suggests that $\gamma$ should be chosen to be about 8 for weak signals, while it can be as small as 4 for strong signals, almost regardless of the noise autocorrelation. To avoid producing interference between neighboring change points, the minimal distance between change points $d=\inf_{j}|v_j-v_{j-1}|$ defined in \eqref{eq:a-v} should be large. On the other hand, if $d$ is not large, then we can choose the bandwidth $\gamma$ to be about $d/c$ where the effective support of the smoothing kernel is $\pm c\gamma$. This has been shown in simulations in Figure \ref{fig:fdrpowd} where $c=4$ and $\gamma=d/4$.

\subsection{Comparison with algorithm FDRSeg}

As mentioned in Section \ref{sec:intro}, FDRSeg is the newest method which can control FDR for change point detection. 
In this subsection, we compare the performance of our method dSTEM with the algorithm FDRSeg. First, it is worth mentioning that our method is mainly designed for autocorrelated random noise, while FDRSeg requires independent and identically distributed random error, which is just a special case of our method ($\nu=0$). Note that FDRSeg contains only one parameter $\alpha_{\text{F}}$, which controls the theoretical upper bound of FDR at $2\alpha_{\text{F}}/(1-\alpha_{\text{F}})$. However, they suggest that in practice their method should give $\text{FDR} \leq \alpha_{\text{F}}$. Thus, we let $\alpha_{\text{F}}$
be 0.05 and 0.1.

Table \ref{tab:white} shows the realized FDR and detection power under independent noise situation.
We see that for small signal $a=1$, dSTEM can almost control FDR and its power is $84.8\%$ when $\gamma=12$; while FDRSeg can attain a little larger power, but it is hard to control FDR when $\alpha=0.1$.
For larger signal $a$, both two methods can control FDR and attain a similar large power.
Table \ref{tab:nowhite} shows the results under the situation of autocorrelated noise ($\nu=1$).
In this case, the performance of dSTEM is nearly the same as that in independent scenario, while FDRSeg tends to estimate a large number of change points, leading to a large FDR, which means it can hardly control FDR.

\begin{table}[!h]
  \centering
  \small
  \caption{Performance comparison of dSTEM and FDRSeg under independent noise.}
  \label{tab:white}
  \begin{threeparttable}      
    \begin{tabular}{c|ccc|ccc} 
      \toprule      
     \multicolumn{1}{c}{} & \multicolumn{3}{c}{\textbf{dSTEM}} & \multicolumn{3}{c}{\textbf{FDRSeg}}\\
      \midrule
      \multirow{5}*{$a=1$}
      &$\gamma$ & FDR & Power & $\alpha_{\text{F}}$ \tnote{a} & FDR & Power\\
      &9 &0.113 & 0.723 & \multirow{2}*{0.05} & \multirow{2}*{0.119} & \multirow{2}*{0.840} \\
      &10 &0.117 & 0.781 & & & \\
      &11 &0.124 & 0.820 & \multirow{2}*{0.1} & \multirow{2}*{0.155} & \multirow{2}*{0.865} \\
      &12 &0.131 & 0.848 & & &\\
      \hline
      \multirow{5}*{$a=1.5$}
      &$\gamma$ & FDR & Power & $\alpha_{\text{F}}$ & FDR & Power\\
      &6 &0.091 &0.896 & \multirow{2}*{0.05} & \multirow{2}*{0.033} & \multirow{2}*{0.943} \\
      &7 &0.089 &0.943 &  & & \\
      &8 &0.088 &0.965 & \multirow{2}*{0.1} & \multirow{2}*{0.090} & \multirow{2}*{0.957} \\
      &9 &0.083 &0.974 &  & & \\
      \hline
      \multirow{5}*{$a=2$}
      &$\gamma$ & FDR & Power & $\alpha_{\text{F}}$ & FDR & Power\\
      &4 &0.085 &0.932 &\multirow{2}*{0.05} & \multirow{2}*{0.008} & \multirow{2}*{0.971}  \\
      &5 &0.088 &0.978  & & &  \\
      &6 &0.082 &0.987  &\multirow{2}*{0.1} & \multirow{2}*{0.049} & \multirow{2}*{0.983}  \\
      &7 &0.085 &0.989 & & & \\
      \bottomrule   
    \end{tabular} 
    \begin{tablenotes}\footnotesize
     \item [a] the only parameter in FDRSeg contralling the theoretical upper bound of FDR to $2\alpha_{\text{F}}/(1-\alpha_{\text{F}})$
    \end{tablenotes}
  \end{threeparttable}
\end{table}  

\begin{table}[!h]
  \centering
  \small
  \caption{Performance comparison of dSTEM and FDRSeg under autocorrelated noise ($\nu=1$).}
    \label{tab:nowhite}
    \begin{tabular}{c|ccc|ccc} 
      \toprule
      \multicolumn{1}{c}{} & \multicolumn{3}{c}{\textbf{dSTEM}} & \multicolumn{3}{c}{\textbf{FDRseg}}\\
      \midrule
      \multirow{5}*{$a=1$}
      &$\gamma$ & FDR & Power & $\alpha_{\text{F}}$ & FDR & Power\\
      &9 &0.112 &0.733 & \multirow{2}*{0.05} & \multirow{2}*{0.808} & \multirow{2}*{1.000}\\
      &10 &0.118 &0.792  & & & \\
      &11 &0.127 &0.827 &\multirow{2}*{0.1} & \multirow{2}*{0.815} & \multirow{2}*{1.000} \\
      &12 &0.134 &0.851  & & & \\
      \hline
      \multirow{5}*{$a=1.5$}
      &$\gamma$ & FDR & Power & $\alpha_{\text{F}}$ & FDR & Power\\
      &6 &0.088 &0.908 & \multirow{2}*{0.05} & \multirow{2}*{0.796} & \multirow{2}*{1.000} \\
      &7 &0.086 &0.949  & & & \\
      &8 &0.086 &0.968 & \multirow{2}*{0.1} & \multirow{2}*{0.802} & \multirow{2}*{1.000} \\
      &9 &0.084 &0.976  & & & \\
      \hline
      \multirow{5}*{$a=2$}
      &$\gamma$ & FDR & Power & $\alpha_{\text{F}}$ & FDR & Power\\
      &4 &0.084 &0.952 & \multirow{2}*{0.05} & \multirow{2}*{0.785} & \multirow{2}*{1.000} \\
      &5 &0.083 &0.980  & & & \\
      &6 &0.083 &0.988 & \multirow{2}*{0.1} & \multirow{2}*{0.795} & \multirow{2}*{1.000}\\
      &7 &0.081 &0.990  & & & \\ 
      \bottomrule   
    \end{tabular}
\end{table}

\section{Data example}
\label{sec:data}

\subsection{Magnetometer sensor readings}
In the field of mobile security, two-factor authentication/verification is of great importance,  which is an extra layer of security of your mobile device, such as smartphones, wearable, and smart home devices, designed to ensure that you are the only person who can unlock your device, even if someone knows your password.
In recent years, gesture based key establishment is a popular topic in communication security and computer science \protect\citep{tan2010dynamic,simao2016natural,kiliboz2015hand}. 

Modern mobile devices embedded with various motion sensors including accelerometer, gyroscope and magnetometer are used to measure and record the gesture performing process.
Obtaining accurate readings of magnetometer is the foundation of magnetometer baesd research. However, during data collection, there are always noises which might be caused by hardware imperfection, manipulation error or sensitivity of the sensor. Particularly, finding the start point and associated end point for each gesture is a big challenge. The goal of this analysis is to find such change points.

In this paper, the data was collected from an experiment where several simple gestures, for example, shaking the smartphone in different directions and at different speeds, were designed. In particular, the smartphone defines a coordinate system of the embedded magnetometer, which is shown in Figure \ref{fig:mobile}. The magnetometer can record the speed of smartphone movement as the readings along X, Y and Z axes. In our case, we will only show the results of the readings on X-axis, since readings along other two axes could be processed similarly. 

\begin{figure}[!h]
\begin{center}
\includegraphics[trim=0 0 0 0,clip,width=3.2in]{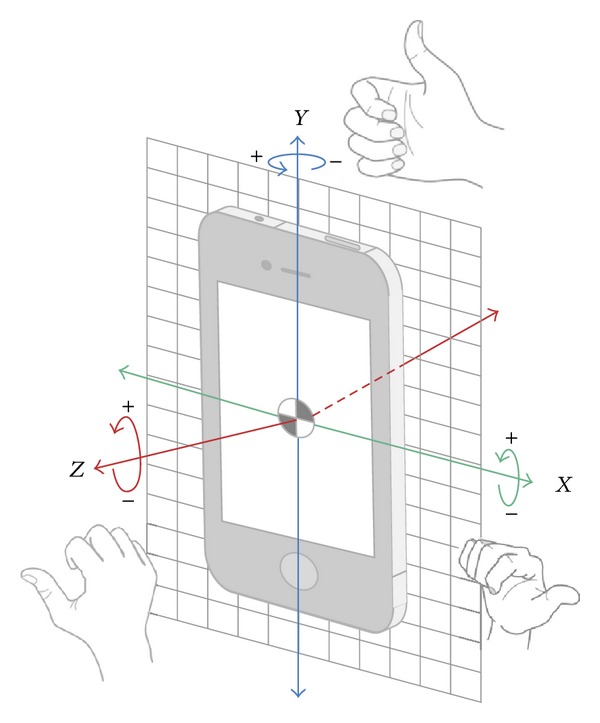}
\caption{The coordinate system of the magnetometer embedded in a smartphone.}
\label{fig:mobile}
 \end{center}
 \end{figure}

\begin{figure}[!h]
\begin{center}
\includegraphics[trim=0 0 0 0,clip,width=4.7in]{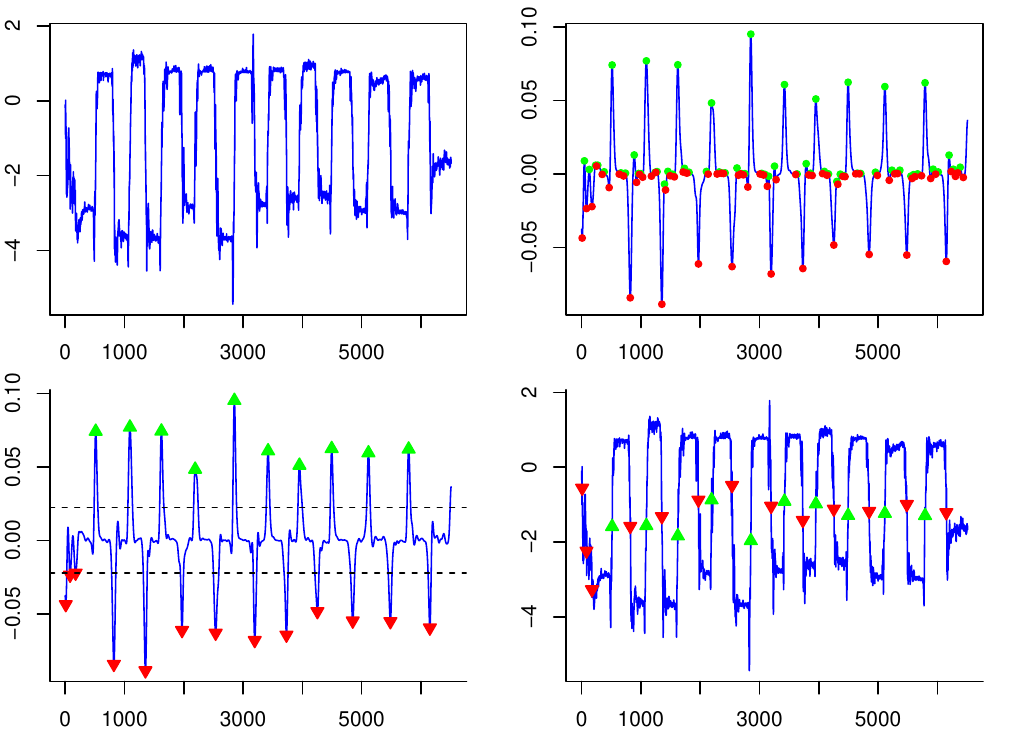}
\caption{Magnetometer example. Top left: Observed data. Top right: Estimated first derivative of the smoothed data, and local maxima (green), local minima (red), and significant height threshold (black dashed line). Bottom left: The first derivative , and the detected positive (green upward triangle) and negative (red downward triangle) change points. Bottom right: The observed data and its change points.}
\label{fig:sensor}
 \end{center}
 \end{figure}

In this data analysis, the sample size is $n = 6510$ and we use the same procedure as in example \ref{data:CGH} except $\gamma=18$,
because the interval between neighboring change points is narrow so that the bandwidth cannot be too large to avoid interference.
Figure \ref{fig:sensor} shows results of the detected change points. Due to the measurement error and magnetic-field interference, the real underlying data will be interfered by slight fluctuations, leading to lots of (349) local maxima and minima, as shown in Figure \ref{fig:sensor} (top right). However, despite that, our method can still find the true change points, whose number is actually not large, as shown in the bottom left panel. In the bottom right panel, it is obvious that the start points and associated end points are very well detected. 

Figure \ref{fig:data_seg} shows the results of FDRSeg, it is obvious that FDRSeg estimates too many (1609) change points, which is nonsense and consistent with the simulation results under autocorrelated noise.

\begin{figure}[!h]
\begin{center}
\includegraphics[trim=0 0 0 0,clip,width=4.7in]{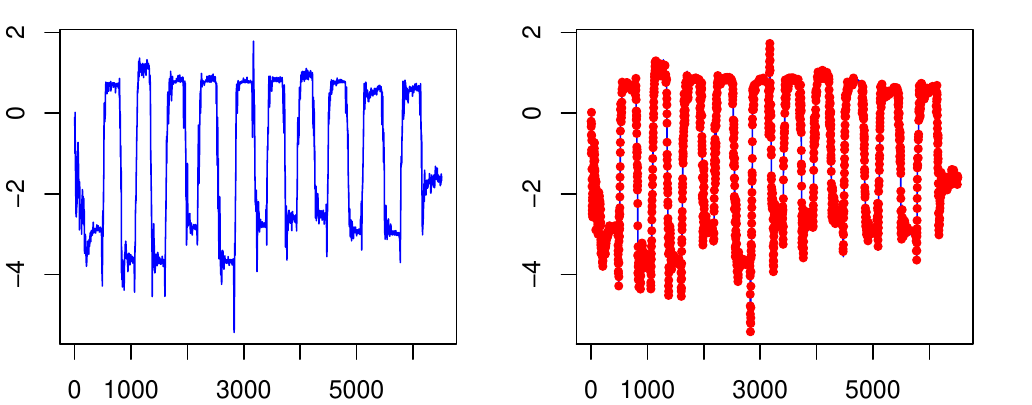}
\caption{Magnetometer example. Left : Observed data. Right: Detected change points (red) by FDRSeg.}
\label{fig:data_seg}
 \end{center}
 \end{figure}
\subsection{Array-CGH data}
\label{data:CGH}
Array-based comparative genomic hybridization (array-CGH) is a high-throughput high-resolution technique used to evaluate changes in the number of copies of alleles at thousands of genomic loci simultaneously. The output is often called Copy Number Variation (CNV) data. Changes in copy number are represented by segments whose mean is displaced with respect to the background. To detect these changes, it is costumary to search for change points along the genome.

In this paper, we apply our method to chromosome 1 of tumor sample \#18 from the dataset of \protect\citet{hsu2005denoising} and \protect\citet{loo2004array}. This sample is one of 37 formalin-fixed breast cancer tumors in that dataset and it was chosen for its visual appeal in the illustration of our method. The data in chromosome 1 of tumor sample \#18 consists of 968 average copy number reads mapped onto 968 unequally spaced locations along the chromosome. For simplicity, the data was analyzed ignoring the gaps in the genomic locations. Figure \ref{fig:data} (top left) shows the data with spacings between reads artificially set to 1. Note that ignoring the spacings does not affect the presence or absence of change points.

To analyze the data, the dSTEM algorithm was applied with a truncated Gaussian smoothing kernel $w_\gamma(t) = (1/\gamma)\phi(t/\gamma)\mathbbm{1}(t\in[-4\gamma, 4\gamma])$ with $\gamma=10$. The bandwidth was chosen not too large in order to avoid interference between neighboring change points. Figure \ref{fig:data} (top right) shows the estimated first derivative \eqref{eq:conv-diff}. Figure \ref{fig:data} (bottom left) marks 19 local maxima (green) and 19 local minima (red).

P-values corresponding to local maxima and minima were computed according to \eqref{eq:pval} using the distribution \eqref{eq:distr}. The required parameters ${\sigma'_\gamma}^2={\rm Var}(z_\gamma'(t))$, $\lambda_{4, \gamma}={\rm Var}(z_\gamma^{''}(t))$, $\lambda_{6, \gamma}={\rm Var}(z_\gamma^{(3)}(t))$
were estimated empirically from the estimated first, second and third derivatives over the observed data sequence. However, the empirical variances were computed using truncated averages instead of regular averages in order to avoid bias from the extreme derivatives at the change points without assuming their presence or location in advance. The BH algorithm was applied to the 38 p-values FDR level 0.2, yielding a p-value significance threshold of $4.42\times 10^{-4}$. The corresponding absolute height threshold of 0.089 is marked as dashed lines in Figure \ref{fig:data} (bottom left). The significant peaks are plotted on the original data in Figure \ref{fig:data} (bottom right) with a location tolerance of $b=2$ for visual reference.

\begin{figure}[t]
\begin{center}
\begin{tabular}{cc}
\includegraphics[trim=0 0 0 0,clip,width=2.2in]{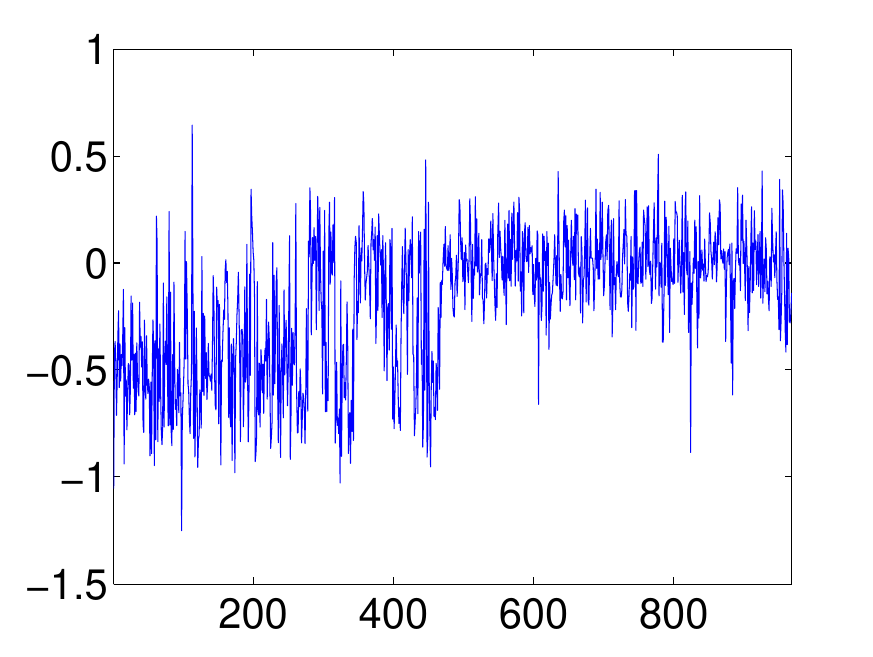} &
\includegraphics[trim=0 0 0 0,clip,width=2.2in]{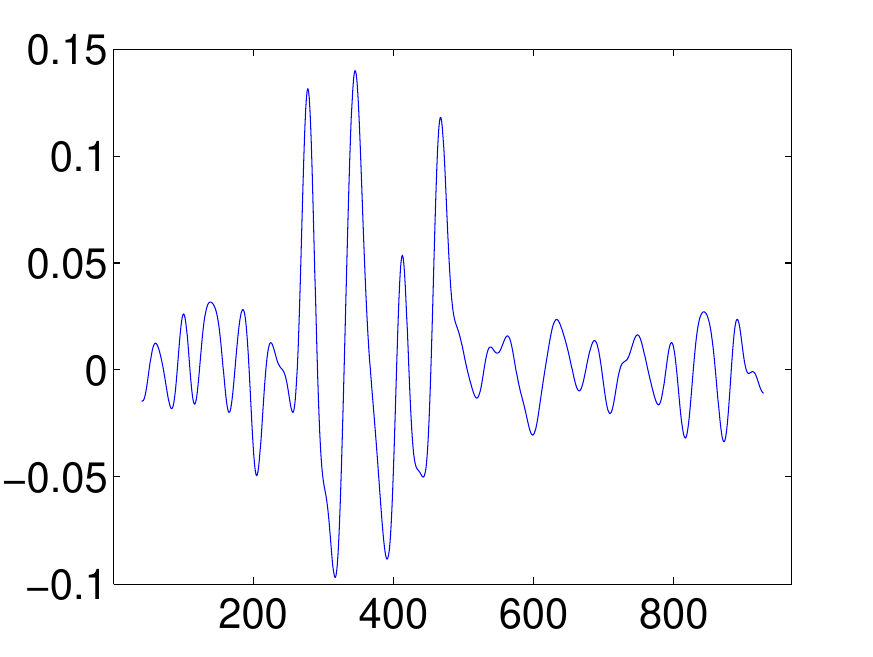}\\
\includegraphics[trim=0 0 0 0,clip,width=2.2in]{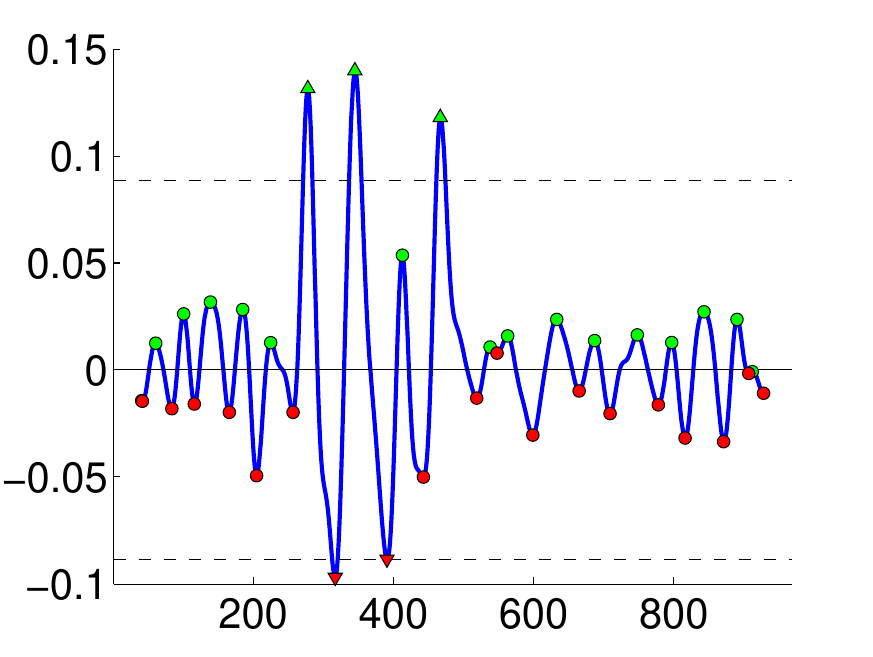} &
\includegraphics[trim=0 0 0 0,clip,width=2.2in]{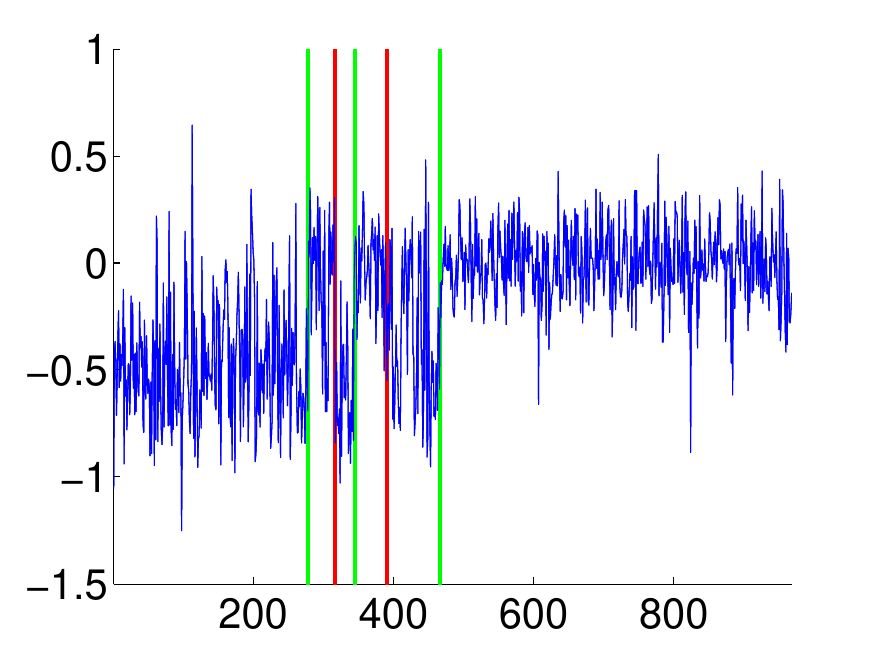}\\
\end{tabular}
\caption{\label{fig:data} Data example. Top left: Observed data. Top right: Estimated first derivative. Bottom left: Local maxima (upward triangles), local minima (downward triangles) of the estimated first derivative, and significance height threshold (black dashed line). Bottom right: The detected positive (green) and negative (red) change points.}
 \end{center}
 \end{figure}

\section{Discussion}
\label{sec:discussion}


\subsection{Increasing and decreasing change points} In this paper, we combined both local maxima and minima of the derivative as candidate peaks, and then applied a multiple testing procedure to find a uniform threshold (in absolute value) for detecting all change points. This approach is sensible when the distributions (number and height) of true increasing and decreasing change points are about the same. Alternatively, different thresholds for detecting increasing and decreasing change points could be found by applying separate multiple testing procedures to the sets of candidate local maxima and local minima. While we applied the BH algorithm to control FDR, in principle other multiple testing procedures may be used to control other error rates.

\subsection{The smoothing bandwidth} A natural and important question is how to choose the smoothing bandwidth $\gamma$. We can see that either a small $\gamma$ (if the noise is highly autocorrelated) or a relatively large $\gamma$ (if the noise is less autocorrelated) is preferred in order to increase power, but only to the extent that the smoothed signal supports $h'_{j,\gamma}(t)$ have little overlap and that detected change points are not displaced by more than the desired tolerance $b$ (recall that the value of $b$ is not used in the dSTEM algorithm itself, but it may be determined by the needs of the specific scientific application). Considering the Gaussian kernel to have an effective support of $\pm c\gamma$, a good value of $\gamma$ may be about $\min(b,d/c)$, where $d$ is the separation between change points. Since the location of the change points is unknown, a more precise optimization of $\gamma$ may require an iterative procedure. Moreover, if some change points are close together and others are far apart, an adaptive bandwidth may be preferable. We leave these as problems for future research.

\section*{Supplementary Material}

An R package named ``dSTEM", for performing the dSTEM algorithm \ref{alg:STEM} for change point detection, is available in R cran.


\printbibliography

\begin{quote}
	\begin{small}
		
		\textsc{Dan Cheng} and \textsc{Zhibing He}\\
		School of Mathematical and Statistical Sciences \\
		Arizona State University\\
		900 S. Palm Walk\\
		Tempe, AZ 85281, U.S.A.\\
		E-mail: \texttt{cheng.stats@gmail.com; zhibingh@asu.edu}

		\vspace{.1in}
		
		\textsc{Armin Schwartzman}\\
		Division of Biostatistics and Bioinformatics and \\
		Halicio\v{g}lu Data Science Institute \\
		University of California San Diego\\
		9500 Gilman Dr.\\
		La Jolla, CA 92093, U.S.A.\\
		E-mail: \texttt{armins@ucsd.edu}				
	\end{small}
\end{quote}

\end{document}